\setlist[enumerate]{
  label=\textnormal{(\alph*)},
}
\DeclarePairedDelimiter\abs\lvert\rvert
\DeclarePairedDelimiterX\Set[1]\{\}{
  
  #1}
\DeclarePairedDelimiterX\BBase[1][]{
  
  #1}
\DeclarePairedDelimiterX\PBase[1](){
  
  #1}
\newcommand\Aver{\mathbb{E}\BBase}
\newtheorem{theorem}{Theorem}[section]
\newtheorem{lemma}[theorem]{Lemma}
\newtheorem{definition}[theorem]{Definition}
\theoremstyle{definition}
\newcommand\ff{\mathcal{F}}
\DeclareDocumentCommand\ff{ t. }{
  \IfBooleanTF{#1}{
    {\mathcal{F}\bm{.}}
  }{
    \mathcal{F}
  }
}
\newcommand\N{\mathbb{N}}
\newcommand\R{\mathbb{R}}
\newcommand\Z{\mathbb{Z}}
\newcommand\C{\mathbb{C}}
\newcommand\dif{\,d}
\newcommand\altadformatter[1]{\textit{#1}:}
\newcommand\setAdtext[1]{%
  \def\@adtext{#1}%
%  \typeout{Ad text changed to '#1'}%
}
\newcommand\Adimplies[2]{
  \par%
  \penalty-1000
  \medskip\noindent%
  \begingroup%
  \let\textup\relax%
  \altadformatter{\@adtext~\ref{#1} $\Rightarrow$ 
    \MakeLowercase{\@adtext}~\ref{#2}}%
  \endgroup%
  \enspace%
}
\begin{document}
%\includeonly{LASSofHFLM}

%\begin{frontmatter}

% "Title of the Paper"
\title{Local asymptotic self-similarity for heavy tailed\\ harmonizable fractional  L\'evy motions}
%\runtitle{A continuous-time framework for ARMA processes}

%\begin{aug}
% indicate corresponding author with \corref{}
% \author{\fnms{John} \snm{Smith}\thanksref{a}\corref{}\ead[label=e1]{smith@foo.com}\ead[label=e2,url]{www.foo.com}}
% \address[a]{\printead{e1};\printead{e2}}

%\author{\fnms{Andreas} \snm{Basse-O'Connor,}\thanksref{a}\ead[label=e1]{\{basse, mikkel, jan, victor\}@math.au.dk}}
%\author{\fnms{Mikkel Slot} \snm{Nielsen,}\thanksref{a}\ead[label=e2]{mikkel@math.au.dk}}
%\author{\fnms{Jan} \snm{Pedersen,}\thanksref{a}\ead[label=e3]{jan@math.au.dk}}
%\and
%\author{\fnms{Victor} \snm{Rohde}\thanksref{a}\ead[label=e4]{victor@math.au.dk}}
%\runauthor{A. Basse-O'Connor, M. Nielsen, J. Pedersen, and V. Rohde}

%imaginary, \address[a]{Department of Mathematics, Aarhus University, Ny Munkegade 118, 8000 Aarhus C, Denmark. \printead{e1}}
%\address[b]{\printead{e2}}
\author{Andreas Basse-O'Connor, Thorbjørn Grønbæk and Mark Podolskij \\
Department of Mathematics, Aarhus University \\
\{basse, thorbjoern, mpodolskij\}@math.au.dk}

%\affiliation{Aarhus University}

%\end{aug}

\maketitle

\begin{abstract}
In this work we characterize the local asymptotic self-similarity of harmonizable fractional L\'evy motions 
in the heavy tailed case. The corresponding tangent process is shown to be the harmonizable fractional stable motion.  In addition, we  provide sufficient conditions for existence of harmonizable fractional L\'evy motions. 
%
%Benassi et al. light tailed case, here we show the heavy tailed case. 
%In this paper we introduce  a new class of processes called harmonizable fractional Lévy motions which includes the harmonizable fractional stable motion. We show that a subclass of harmonizable fractional Lévy motions are locally asymptotically selfsimilar with tangent process harmonizable fractional stable motion.  
\\
\\
\noindent \textit{Keywords: local asymptotic self-similarity; harmonizable processes; fractional processes; spectral representations.}

%\noindent \footnotesize \textit{AMS 2010 subject classifications:} 

\end{abstract}

%\begin{keyword}
%\kwd{continuous-time ARMA model}
%\kwd{stochastic delay differential equations}
%\kwd{processes of Ornstein-Uhlenbeck type}
%\kwd{long-range dependence}
%\kwd{CARMA processes}
%\kwd{moving averages}
%\end{keyword}

% history:
% \received{\smonth{1} \syear{0000}}

\numberwithin{equation}{section} 
% Fixes equation numbering to prefix the section number.
\pagestyle{empty}
% Removes the header
\pagestyle{plain}

%\end{frontmatter}
%\input{EditDashboard}
\section{Introduction}

The class of self-similar stochastic processes plays a key role  in probability theory as they appear in some of the most fundamental limit theorems, see \cite{lamperti1962semi}, and in modeling they are used  in geophysics, hydrology, turbulence and economics, see \cite{willinger1996bibliographical} for numerous references. This class of stochastic processes are invariant in distribution under suitable time and space scaling, that is, a stochastic process $(X_t)_{t\in\R}$ is called self-similar with index $H\in\R$ if for all $c>0$ the two processes   $(X_{ct})_{t\in \R}$ and $(c^H X_t)_{t\in \R}$ equals in finite dimensional distributions. 
The only self-similar centered Gaussian process with stationary increments is the fractional Brownian motion (up to scaling), which is a centered Gaussian process $(X_t)_{t\in \R}$ with $X_0=0$ a.s.\  and covariance function 
\begin{equation}\label{sfdsfswfh}
\mathrm{Cov}(X_t,X_s) = \frac{1}{2}(|t|^{2H}+|s|^{2H}-|s-t|^{2H}) \qquad \text{for all } s,t\in \R,
\end{equation}
where $H\in (0,1)$. The fractional Brownian motion has  
a \emph{moving average representation} and a \emph{harmonizable representation}, and both lead to the same process (defined by \eqref{sfdsfswfh}), for further details see Subsection~\ref{sdlfjhhs}.  However for non-Gaussian processes their moving average and harmonizable representations are very different, see e.g.\ \cite{Camb1989} and \cite{SamoTaqqu1994} for the case of $\alpha$-stable processes. Only a very specific class of processes are exact self-similar, but a much larger class of processes behaves locally as a self-similar processes - this is already seen within the class of L\'evy processes. 

A stochastic process $(X_t)_{t\in\R}$ is said to be  \emph{locally asymptotically self-similar}  if  there exists a number $H\in \R$ and a non-degenerate process $(T_t)_{t\in \R}$ such that
\begin{align}\label{slkjdfljsd}
\PBase*{\frac{X_{\epsilon t}}{\epsilon^H}}_{t\in\R}  \xrightarrow[\epsilon \to 0_+]{d} (T_t)_{t\in\R},
\end{align}
where $\xrightarrow{d}$ denotes converege in finite dimensional distributions.
The process  $T=(T_t)_{t\in\R}$ is called the tangent process of $X$, and by \eqref{slkjdfljsd}, $T$  is necessarily self-similar.  
Local self-similarity means that at small time-scales the stohastic process $(X_t)_{t\in\R}$ is approximately self-similar and may be approximated by its tangent process. This property was introduced to provide a more flexible modeling framework compared to global self-similarity.  For applications, it has been used to  study the behaviour of flows, see \cite{Crovella1996} and \cite{stoev2006lass}, and for showing high frequency asymptotic results,  see
\cite{Bardet-Surgailis} or \cite{BLP}.

\medskip
\noindent
 \textbf{Moving average fractional L\'evy motions:}
Starting from the moving average representation of the fractional Brownian motion, 
 \citep{Marquardt2006} has, among many others, studied fractional L\'evy processes defined as 
 \begin{equation}\label{slfj}
 X_t =  \int_{-\infty}^t \Big((t-s)_+^{\beta} - (-s)_+^\beta\Big)\,dL_s,\qquad t\in \R, 
 \end{equation}
 where $\beta\in (0,1/2)$ and $(L_t)_{t\in \R}$ is a centered L\'evy process 
 with finite second moment. Throughout this paper  $x_+:=\max\{x,0\}$ and $x_-:=-\min\{x,0\}$ denote the  positive and negative parts of any number $x\in\R$. 
 
In the following, we will call such processes for \emph{moving averages fractional L\'evy motions} to distinct them from their harmonizable counterpart. Under a regular variation assumption on the L\'evy measure of $L$ near zero,    \cite{Marquardt2006}  shows that a moving average fractional L\'evy motion is never self-similar, but it is locally asymptotically self-similar with tangent process the linear fractional stable motion, which is a process of the form \eqref{slfj} with $L$ being an $\alpha$-stable L\'evy process,
  cf.\ \cite{Camb1989} and Theorems~4.4 and 4.5 of \cite{Marquardt2006}.

 \medskip
\noindent
 \textbf{Harmonizable fractional L\'evy motions:}
 Next we define the class of harmonizable fractional Lévy motions which includes the harmonizable fractional stable motion  introduced in \cite{Camb1989}.

\begin{definition} \label{def:hflm}
A stochastic process $(X_t)_{t\in\R}$ is called a harmonizable fractional Lévy motion  with parameters 
$(\alpha,H)\in \R^2$  if 
\begin{align}\label{def-hflm}
X_t=\int_\R \frac{e^{its}-1}{is} \PBase*{a(s_+)^{-H-1/\alpha+1} + b(s_-)^{-H-1/\alpha+1}} \dif L_s, \quad t\in\R
\end{align}
where $L$ is a rotationally invariant complex-valued Lévy process, and $a,b\in \R$. 
\end{definition}

The (over) parametrization in Definition~\ref{def:hflm} is chosen due to our forthcoming Assumption~(A). In fact under Assumption~(A) below, the $H$ parameter in Definition~\ref{def:hflm} turns out to be exactly the number $H$ in the definition of local asymptotic self-similarity.  From Theorem~\ref{thm:hflm}, below, it follows  that the harmonizable fractional Lévy motions have stationary increments and  rotational invariant distributions. Furthermore, we give concrete conditions for existence of the harmonizable fractional L\'evy motion on $(\alpha, H)$ and the L\'evy measure of $L$. 

In \cite{Benassi2002}, local asymptotic self-similarity is studied for a slightly different class of harmonizable fractional motions  under the assumption that all moments are finite, e.g.\ the Lévy measure $\nu$ of the Lévy process $L$ satisfies that
 \begin{equation}\label{dsfkhbhs}
 \int_{|x|>1} |x|^p\,\nu(dx)<\infty\qquad \text{ for all } p>0.
 \end{equation}
Their result is the following: 
\begin{theorem}[Benassi, Cohen and  Istas] \label{thmBCI}
Let $(X_t)_{t\in \R}$ denote a harmonizable fractional L\'evy motion as in Definition~2.3 of \cite{Benassi2002} satisfying the moment condition (\ref{dsfkhbhs}). Then the process $X$ is locally asymptotically self-similar with index $H$ and tangent process the fractional Brownian motion, that is, 
\begin{align}
\label{BCI}
\PBase*{\frac{X_{\epsilon t}}{\epsilon^H}}_{t\in\R} \xrightarrow[\epsilon \to 0_+]{d} (c_0 B^H_t)_{t\in\R}.
\end{align}
where $(B^H_t)_{t\in \R}$ is a fractional Brownian motion with Hurst index $H$ and $c_0$ is a suitable constant. 
\end{theorem}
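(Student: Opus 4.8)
The plan is to prove (\ref{BCI}) at the level of the characteristic functions of the finite-dimensional distributions, exploiting that the local rescaling sends the spectral variable into the high-frequency region, where the finite-variance L\'evy exponent is indistinguishable from a quadratic; this is precisely what forces a Gaussian, hence fractional Brownian, tangent. First I would fix $n$, times $t_1,\dots,t_n\in\R$ and reals $\theta_1,\dots,\theta_n$, and write the rescaled linear combination $\sum_j\theta_j X_{\epsilon t_j}/\epsilon^H$ as a single integral $\int_\R g_\epsilon(s)\,\dif L_s$ with $g_\epsilon(s)=\epsilon^{-H}\sum_j\theta_j\frac{e^{i\epsilon t_j s}-1}{is}\phi(s)$, where $\phi$ denotes the spectral kernel in the representation of $X$, homogeneous of the degree $1/2-H$ that encodes index-$H$ scaling. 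Since $L$ is a rotationally invariant complex L\'evy process, the cumulant functional of such an integral is $\log\mathbb{E}\exp(i\sum_j\theta_j X_{\epsilon t_j}/\epsilon^H)=\int_\R\psi(g_\epsilon(s))\,\dif s$, where $\psi$ is the characteristic exponent of $L_1$; by rotational invariance $\psi(z)=\psi_0(|z|)$ depends only on $|z|$.

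Next I would substitute $u=\epsilon s$ and use homogeneity: $\frac{e^{i\epsilon t_j s}-1}{is}=\epsilon\,\frac{e^{it_j u}-1}{iu}$ and $\phi(u/\epsilon)=\epsilon^{H-1/2}\phi(u)$. Collecting the powers $\epsilon^{-H}\cdot\epsilon\cdot\epsilon^{H-1/2}=\epsilon^{1/2}$ together with the Jacobian $\dif s=\epsilon^{-1}\dif u$, the cumulant becomes $K_\epsilon(\theta)=\epsilon^{-1}\int_\R\psi\big(\epsilon^{1/2}G(u)\big)\,\dif u$, where $G(u)=\sum_j\theta_j\frac{e^{it_j u}-1}{iu}\phi(u)$. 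The homogeneity degree is exactly the one for which the Jacobian factor $\epsilon^{-1}$ is matched by the quadratic behaviour of $\psi$ at the origin.

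The analytic core is then a dominated-convergence argument. The moment condition (\ref{dsfkhbhs}) gives in particular $\int_{\C}|w|^2\,\nu(\dif w)<\infty$, and this finite-variance consequence yields both the pointwise expansion $\psi(z)=-\tfrac{c}{2}|z|^2+o(|z|^2)$ as $z\to0$ and a global majorant $|\psi(z)|\le C|z|^2$. For every fixed $u$ one has $\epsilon^{1/2}G(u)\to0$, so $\epsilon^{-1}\psi(\epsilon^{1/2}G(u))\to-\tfrac{c}{2}|G(u)|^2$, while the global bound supplies the $\epsilon$-uniform dominating function $\epsilon^{-1}|\psi(\epsilon^{1/2}G(u))|\le C|G(u)|^2$. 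I would then verify that $\int_\R|G(u)|^2\,\dif u<\infty$ for $H\in(0,1)$ — this amounts to the integrability of $|u|^{1-2H}$ near $0$ and of $|u|^{-1-2H}$ near $\infty$ — so dominated convergence gives $K_\epsilon(\theta)\to-\tfrac{c}{2}\int_\R|G(u)|^2\,\dif u$.

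Finally I would identify the limit as a Gaussian cumulant: the quadratic form $\int_\R|G(u)|^2\,\dif u$ is exactly the covariance functional of the harmonizable representation of fractional Brownian motion with index $H$, so after matching the normalising constant $c_0$ the finite-dimensional distributions converge to those of $(c_0 B^H_t)_{t\in\R}$, which is (\ref{BCI}). The main obstacle is the dominated-convergence step: one must secure the uniform quadratic majorant for $\psi$ and the integrability of $|G|^2$ at both ends of the frequency axis, and control the region where $\epsilon^{1/2}|G(u)|$ fails to be small, whose contribution must vanish in the limit. It is there that the finite-variance content of the moment assumption (\ref{dsfkhbhs}) is genuinely used, the full strength of (\ref{dsfkhbhs}) merely making the remainder estimates comfortable.
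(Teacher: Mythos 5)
Your proposal is correct, but the comparison here is slightly unusual: the paper does not prove Theorem~\ref{thmBCI} at all --- it is quoted from \cite{Benassi2002}, and the paper explicitly remarks that the original argument there ``relies heavily on power series expansion of the characteristic function which is only available under the assumption~(\ref{dsfkhbhs})''. Your route is therefore genuinely different from the cited proof: you replace the full power-series expansion by the single second-order Taylor estimate $\psi(z)=-\tfrac{c}{2}|z|^2+o(|z|^2)$ together with the global bound $|\psi(z)|\le C|z|^2$, both of which need only $\int \|x\|^2\,\nu(dx)<\infty$; as you observe, this shows the Gaussian tangent really only uses finite variance, not all moments. What is striking is that your skeleton is \emph{exactly} the one the paper uses for its own main result (Theorem~\ref{thm:lass}): write the log-characteristic function of the finite-dimensional distributions as $\int_\R\psi(g_\epsilon(s))\,ds$, substitute $u=\epsilon s$, exploit the homogeneity of the kernel so that all powers of $\epsilon$ collapse into a single prefactor inside $\psi$ against the Jacobian $\epsilon^{-1}$, and pass to the limit by dominated convergence. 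The only structural difference is which asymptotic regime of $\psi$ (equivalently of $\nu$) governs the limit: in your light-tailed case the prefactor is $\epsilon^{1/2}$ and the quadratic origin behaviour of $\psi$ cancels the Jacobian, producing the Gaussian form $-\tfrac{c}{2}\int|G(u)|^2\,du$; in the paper's heavy-tailed case the prefactor is $\epsilon^{1/\alpha}$ and it is the regular variation of the L\'evy density at infinity that cancels the Jacobian, producing the $\alpha$-stable form $-c\int\|g_{\theta,t}(u)\|^\alpha\,du$. Two small remarks: your worry at the end about ``the region where $\epsilon^{1/2}|G(u)|$ fails to be small'' is vacuous, since $|e^{i\theta}-1-i\theta|\le\theta^2/2$ gives the quadratic majorant globally, not just near the origin, so dominated convergence needs no separate treatment of that region; and if $X$ is genuinely complex-valued you should, as the paper does in its main proof, carry real and imaginary test parameters $(\theta_j^{(1)},\theta_j^{(2)})$ rather than a single real $\theta_j$, though rotational invariance makes this pure bookkeeping.
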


    The main aim of this work is to characterize the local asymptotic  self-similarity of  the harmonizable fractional L\'evy motion   when $L$ has heavy tails, violating the moment condition \eqref{dsfkhbhs}. 
The methods of \cite{Benassi2002}  rely heavily on  power series expansion of the characteristic function which is only available under the assumption~(\ref{dsfkhbhs}).
%%
%%\begin{theorem}[Benassi, Cohen  and  Istas]\label{thmBCI}
%%   Let $X=(X_t)_{t\in \R}$ be the harmonizable fractional L\'evy motion given by 
%%\eqref{def-hflm}. Then $X$ is locally self-similar with index $H$ and tangent process the fractional Brownian motion, that is, 
%%\begin{align}
%%\label{BCI}
%%\PBase*{\frac{X_{\epsilon t}}{\epsilon^H}}_{t\in\R} \xrightarrow[\epsilon \to 0_+]{d} (B^H_t)_{t\in\R},
%%\end{align}
%%where $(B^H_t)_{t\in \R}$ is the fractional Brownian motion with Hurst index $H$, and 
%%\end{theorem}
%%
%%The main result of this considers the case the moment condition on the L\'evy measure of Theorem~\ref{thmBCI} is not satisfied, see Theorem~\ref{thm:lass} below. 
%%
% 
%\subsection{Main result}
Instead of this assumption, we consider the case where the 
L\'evy measure $\nu$ is regular varying in the following sense. 

\medskip\noindent
 \textbf{Assumption (A):}  \emph{Suppose that  $L$ is a rotationally invariant complex-valued L\'evy process without Gaussian component and let $\nu$ denote its L\'evy measure. We assume that  $\nu$ is absolutely continuous with respect to the two dimensional Lebesgue measure with a density $f:\R^2\to \R_+$ satisfying 
\begin{align*}
{}& f(x)\sim c_0\Vert x \Vert^{-2-\alpha}\quad \; \,\text{  as } \Vert x \Vert \to \infty \qquad \text{ and}
\\
{}& f(x) \leq C\Vert x \Vert^{-2-\alpha} \quad \text{for all }x\in\R^2,
\end{align*}
where $c_0, C>0$. }

\medskip
The following theorem, which  is the main result of this paper, characterizes the local asymptotic 
self-similarity of harmonizable fractional L\'evy motions in the heavy-tailed case, and additional provides an existence result  for them.

\begin{theorem} \label{thm:lass}
Let  $(\alpha,H)\in \interval[open]{0}{2}\times\interval[open]{0}{1}$ and suppose that Assumption~(A) is satisfied.   Then the  harmonizable fractional L\'evy motion  $(X_t)_{t\in\R}$,  defined in \eqref{def-hflm}, is well-defined and it is locally asymptotically self-similar with index $H$ and tangent process the harmonizable fractional stable motion, that is, \begin{align} \label{align:LASSpropertyofHFLM}
\PBase*{\frac{X_{\epsilon t}}{\epsilon^H}}_{t\in\R} \xrightarrow[\epsilon \to 0_+]{d} (C_t)_{t\in\R},
\end{align}
where the convergence is in finite dimensional distributions and $(C_t)_{t\in\R}$ denotes a harmonizable fractional stable motion with parameters $(\alpha,H)$, which is defined in \eqref{def-hflm} with $L$ being a complex-valued rotationally invariant $\alpha$-stable L\'evy process. 
\end{theorem}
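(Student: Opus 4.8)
The plan is to prove both claims---well-definedness and local asymptotic self-similarity---from the L\'evy--Khintchine representation of the characteristic function of deterministic integrals against the rotationally invariant L\'evy process $L$, reducing everything to the small-argument asymptotics of the characteristic exponent of $L$.

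Write $\psi$ for the characteristic exponent of $L_1$, so that for a deterministic complex-valued integrand $h$ the integral $\int_\R h\,dL$ has log-characteristic function $\theta\mapsto\int_\R\psi(\theta h(s))\,ds$, evaluated through $\mathrm{Re}(\bar\theta\,\cdot\,)$, by the theory of integration against infinitely divisible independently scattered random measures. Since $L$ is rotationally invariant and has no Gaussian part, $\psi$ is real and radial, $\psi(\theta)=\int_{\R^2}(\cos\langle\theta,z\rangle-1)f(z)\,dz$. The two halves of Assumption~(A) serve two distinct purposes. The global bound $f(x)\le C\Vert x\Vert^{-2-\alpha}$ gives, via the scaling substitution $z=w/|\theta|$, the clean estimate $|\psi(\theta)|\le C'|\theta|^\alpha$ for all $\theta$ (the resulting $w$-integral converging precisely because $\alpha\in(0,2)$). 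The exact tail $f(x)\sim c_0\Vert x\Vert^{-2-\alpha}$ gives instead the pointwise asymptotics $\psi(\theta)\sim-K|\theta|^\alpha$ as $\theta\to0$, with $K=c_0\int_{\R^2}(1-\cos\langle e_1,w\rangle)\Vert w\Vert^{-2-\alpha}\,dw$ for a fixed unit vector $e_1$.

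For well-definedness I would invoke the Rajput--Rosinski integrability criterion; using $\int_\C\min(|vz|^2,1)\,\nu(dz)\le C''|v|^\alpha$ (the same scaling estimate) and the vanishing of the drift by symmetry, it reduces to $\int_\R|g_t(s)|^\alpha\,ds<\infty$, where $g_t$ is the kernel in \eqref{def-hflm}. Estimating $g_t$ near $s=0$, where $g_t(s)\sim t\,(a(s_+)^{-H-1/\alpha+1}+b(s_-)^{-H-1/\alpha+1})$, and as $|s|\to\infty$, where $g_t(s)=O(|s|^{-H-1/\alpha})$, shows that $|g_t|^\alpha$ is integrable exactly in the range $H\in(0,1)$, $\alpha\in(0,2)$. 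The identical estimate gives $\int_\R|G|^\alpha<\infty$ for every finite linear combination $G=\sum_j\theta_j g_{t_j}$, which is simultaneously the well-definedness of the candidate tangent process.

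For the convergence, I would express the joint characteristic exponent of $(\epsilon^{-H}X_{\epsilon t_j})_j$ as $\int_\R\psi(\epsilon^{-H}\sum_j\theta_j g_{\epsilon t_j}(s))\,ds$ and substitute $s=u/\epsilon$. The algebraic identity $g_{\epsilon t}(u/\epsilon)=\epsilon^{H+1/\alpha}g_t(u)$ collapses the powers of $\epsilon$ and turns the exponent into $\epsilon^{-1}\int_\R\psi(\epsilon^{1/\alpha}G(u))\,du$ with $G=\sum_j\theta_j g_{t_j}$. The two ingredients now combine: pointwise $\epsilon^{-1}\psi(\epsilon^{1/\alpha}z)\to-K|z|^\alpha$, while the global bound gives $|\epsilon^{-1}\psi(\epsilon^{1/\alpha}G(u))|\le C'|G(u)|^\alpha$, an integrable majorant by the previous paragraph. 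Dominated convergence yields the limit $-K\int_\R|G(u)|^\alpha\,du$, which is exactly the characteristic exponent of $\sum_j\theta_j C_{t_j}$ for the harmonizable fractional stable motion, once the scale of the driving stable process is matched to $K$; this proves \eqref{align:LASSpropertyofHFLM}. I expect the main obstacle to be the two-sided control of $\psi$---a clean global $|\theta|^\alpha$ bound together with the exact leading constant near $0$---sharp enough to drive the dominated-convergence step; once this is secured by the scaling substitution, the remainder is bookkeeping of exponents and identification of the limiting exponent.
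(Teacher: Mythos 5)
Your proposal is correct and follows essentially the same route as the paper: the same characteristic-function representation, the same scaling identity $g_{\epsilon t}(u/\epsilon)=\epsilon^{H+1/\alpha}g_t(u)$, the same use of the two halves of Assumption~(A) (the exact tail for the pointwise stable limit, the global bound for domination), and the same identification of the limiting exponent as $-\mathrm{const}\cdot\int\Vert G\Vert^\alpha$. The only differences are organizational: you precompute $\psi(\theta)\sim -K|\theta|^\alpha$ and $|\psi(\theta)|\le C'|\theta|^\alpha$ so that the dominated convergence is one-dimensional in $u$ (the paper runs it jointly on $\R\times\R^2$ and then evaluates the inner integral in polar coordinates), and your existence argument reduces directly to $\int|g_t|^\alpha<\infty$ via the bound $f(x)\le C\Vert x\Vert^{-2-\alpha}$ rather than going through the paper's more general Lemma~\ref{lemma:asymptexistence}.
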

The choice of constants for $(C_t)_{t\in\R}$ can be found by examining the proof. We note that the tangent process in Theorem~\ref{thm:lass} differs from the tangent processes appearing in Theorem~\ref{thmBCI} and Theorem~4.5 of \cite{Marquardt2006}. From this we infer that it is the behaviour of the L\'evy measure of $L$ close to zero which dominates  in the moving average setting, whereas it is the behaviour of the L\'evy measure of $L$ far away from zero which dominates  in the harmonizable setting. 
%In order to properly define complex-valued stochastic integrals, we develop integration theory for complex-valued random measures. A special case of complex-valued random measures is the Lévy-driven random measures, $dM_s=g(s) \dif L_s$, where $L$ is an complex-valued rotationally invariant Lévy process and $g$ is a complex-valued deterministic function. A harmonizable processes with a Lévy-driven random measure is called a Lévy-driven harmonizable processes. Lemma \ref{lemma:asymptexistence} provides as an easy-to-check existence criterion for Lévy-driven random measures.
The structure of the paper is as follows:  Section~\ref{sdlfjhhs} explains the role played by harmonizable processes within the class of stationary processes.  Section \ref{sect:harmproc} introduces complex random measures, their integration and provide existence criterias for  harmonizable processes. Finally, at the end of the last section, we present the proof of Theorem~\ref{thm:lass}. 

\section{Background on harmonizable processes}\label{sdlfjhhs}

Stationary processes are one of the main classes of stochastic processes. For stationary, centered Gaussian processes, it is well-known that every $L^2$-continuous process $(X_t)_{t\in \R}$ has a \emph{harmonizable representation} of the form 
\begin{align}\label{dslfjsdf}
X_t = \int_\R e^{its} \, M(ds), \quad t\in \R,
\end{align}
for some complex-valued Gaussian random measure $M$ defined on $\R$. Furthermore, a rather large class of these processes have, in addition, a \emph{moving average representation}, that is, a representation of the form 
\begin{align}\label{sflsf}
X_t = \int_{\R} g(t-s)\,  dB_s, \quad t\in\R,
\end{align}
where $g$ is a deterministic function and $(B_t)_{t\in \R}$ is a two-sided real-valued Brownian motion. (Note that, the Brownian motion may be viewed as a shift-invariant Gaussian random measure.) Indeed, the class of Gaussian processes having a moving average representation corresponds exactly to those processes with absolute continuous spectral measure $\mu$. Recall that the spectral measure $\mu$ is given by $\mu(A)=\mathbb E[ | M(A)|^2]$ for $A\in \mathcal B(\R)$, where $M$ is given in \eqref{dslfjsdf}.    These classical results can be found in e.g.\ \cite{doob1953stochastic} or \cite{yaglom2004introduction}.

The only centered  Gaussian self-similar process with stationary increments is the  fractional Brownian motion $(B^H_t)_{t\in \R}$ with Hurst index $H\in (0,1)$,  and as already mentioned in the introduction, this process has the following two representations 
\begin{align*}
B^H_t={}& \int_{\R} \PBase*{(t-s)_+^{H-1/2} - (-s)_+^{H-1/2}}\dif B_s, \ \ \text{(``moving average representation'')}
\\
B^H_t={}& \int_{\R} \frac{e^{its}-1}{is} \abs{s}^{-H-1/2+1}  \,M(ds), \qquad   \quad \text{(``harmonizable representation'')},
\end{align*} 
which yields the same process in distribution, see Chapter~7.2 of \cite{SamoTaqqu1994} for further details. Hence, the fractional Gaussian noise $(B^H_n-B^H_{n-1})_{n\in \Z}$ has  both a harmonizable, \eqref{dslfjsdf}, and a moving average,  \eqref{sflsf},  representation. For comparison we will discuss the structure of stationary $\alpha$-stable processes with $\alpha\in (0,2)$ in the following.  

In sharp contrast to the Gaussian situation the class of $\alpha$-stable stationary increments self-similar processes, $\alpha\in (0,2)$, is huge, and is far from being understood by now. However, two natural generalizations of the fractional Brownian motion to the $\alpha$-stable setting are proposed in \cite{Camb1989} generalizing the fractional Brownian motion to $\alpha$-stabe processes by replacing the driving Gaussian random measure with an $\alpha$-stable random measure in its moving average and harmonizable representations. This leads to the \emph{harmonizable fractional stable motion} $(X_t)_{t\in \R}$, which is defined as 
\begin{align*}
X_t=\int_\R \frac{e^{its}-1}{is}\PBase*{a(s_+)^{-H-1/\alpha +1} + b (s_-)^{-H-1/\alpha+1}}\, dL_s, \quad t\in\R,
\end{align*}
where $(L_t)_{t\in \R}$ is a two-sided, complex-valued,   $\alpha$-stable, rotationally invariant L\'evy process, and to the \emph{linear fractional stable motion} $(X_t)_{t\in \R}$, which is defined as
\begin{align*}
X_t=\int_\R a\big((t-s)_+^{H-1/\alpha} - (-s)_+^{H-1/\alpha}\big) + b \big( (t-s)_-^{H-1/\alpha} - (-s)_-^{H-1/\alpha}\big) \dif L_s,
\end{align*}
where $(L_t)_{t\in\R}$ is a two-sided, real-valued, $\alpha$-stable, symmetric Lévy process. 
Notice that  corresponding noise processes $(X_n-X_{n-1})_{n\in \Z}$ for the linear and harmonizable fractional stable motions are moving averages and harmonizable processes, respectively.

Indeed, the Gaussian assumption is crucial for the above equality between the harmonizable and moving average representations to hold, as it turns out that harmonizable fractional stable motion and linear fractional stable motion as quite different processes, cf.\ \cite{Camb1989} and \cite{SamoTaqqu1994}.
The seminal paper \cite{Rosinski1995} shows that every stationary $\alpha$-stable process has a \emph{unique} decomposition into a (mixed) moving average component, a harmonizable component and a process of the  ``third kind'', which does  not admit moving average nor harmonizable components.  The class of mixed  moving averages may be viewed as the class of processes having the least memory, whereas class of harmonizable processes is the class having the largest degree of memory, and the processes of the third kind are in  between. These facts come from ergodic consideration, see the introduction of \cite{Ros-Sam} for more details, and are also illustrated by the fact that moving averages are always  mixing and harmonizable processes are never  ergodic nor mixing. Hence by studying moving averages and harmonizable processes, we are examining the two extremes of stationary $\alpha$-stable  processes. 

Thus the comparison of results on local asymptotical self-similarity in the introduction between linear fractional stable motions and harmonizable fractional stable motions are, in fact, a comparison between  $\alpha$-stable self-similar stationary increments processes with the least memory and  with the most memory. This encircles the local asymptotical behaviour of general $\alpha$-stable self-similar processes with stationary increments.

\section{Preliminaries on complex stochastic integration theory} \label{sect:harmproc}

All random variables and processes will be defined on a probability space~$(\Omega, \mathcal{F}, \mathbb{P})$.  A real-valued stochastic variable $X$ is symmetric $\alpha$-stable ($S\alpha S$) if for some $\alpha\in\interval[open left]{0}{2}$, the characteristic function of $X$ satisfies
\begin{align*}
\mathbb{E}\BBase*{\exp\Set*{i t X}} = \exp\PBase*{-\sigma^{\alpha} \abs{t}^{\alpha}},\qquad \text{for all } t \in \R, 
\end{align*}
for some parameter $\sigma> 0$ called the scale parameter. If $\alpha=2$, then $X$ has a centered Gaussian distribution and $\sigma^2$ is the variance of $X$. Rotationally invariant random variables and processes are defined as follows:
%\subsection{Complex random measures and integration}
%Complex-valued stochastic processes have been studied mostly in connection with harmonizable representation of the fractional Brownian motion and the aforementioned harmonizable fractional stable motion. 

\begin{definition}
A complex-valued random variable $X$ is \emph{rotationally invariant} if
\begin{align} \label{align:rotinv}
e^{i\theta}X\stackrel{d}{=}X, \quad \textrm{for all } \;  \theta \in \interval[open right]{0}{2\pi},
\end{align}
where $\stackrel{d}{=}$ denotes equality in distribution.
Similarly, a complex-valued stochastic process $(X_t)_{t\in T}$ is \emph{rotationally invariant} if every complex linear combination is rotationally invariant, e.g.\ $\sum_{n=1}^N z_n X_{t_n}$ is rotationally invariant.
\end{definition} 
Rotational invariance is called isotropy in some references but due to the ambiguity of isotropy we chose to use rotational invariance, cf.\ the discussion in Example 1.1.6 of \cite{SamorodnitskyBook}. A complex-valued  process can equivalently be regarded as a $\R^2$-valued random variable, in which case rotational invariance is invariance in distribution wrt.\ rotation matrices. We will with some ambiguity switch between the $\C$ and $\R^2$.
From the definition it is immediate that a rotationally invariant random variable $X=X_1+iX_2$ is symmetric and furthermore if it is infinitely divisible, then $X_1$ and $X_2$ share the same Lévy measure $\nu$.
Let $\mathcal{B}(\R)$ denote the Borel sets on $\R$, $\mathcal{B}_b(\R)$ the bounded Borel sets on $\R$ and $L^0_{\C}(\Omega)$ the complex-valued random variables. 
For completeness, we define complex-valued infinitely divisible random measures and state well known stochastic integration results, cf.\ \cite{urbanik1968random} and \cite{RajputRosinski1989}.

\begin{definition}[Complex-valued random measure] \label{definition:crm}
A complex-valued random measure is by definition a complex-valued set function
\begin{align*}
M: \mathcal{B}_b(\R) \to L^0_{\C}(\Omega),
\end{align*}
such that for disjoints sets $A_1,A_2,\ldots \in \mathcal{B}_b(\R)$, the complex-valued random variables 
\begin{align*}
M(A_1), M(A_2), \ldots
\end{align*}
are independent and infinitely divisible,  and if $\bigcup_{n\in\N} A_n \in \mathcal{B}_b(\R)$ then
\begin{align*}
M\PBase*{\bigcup_{n=1}^\infty A_n}=\sum_{n=1}^{\infty} M(A_n) \quad a.s.,
\end{align*}
where the series converges almost surely. 
\end{definition}
%\begin{example}
%An example of an rotational invariant random measure is the $\alpha$-stable measure defined in Chapter 6 of \cite{SamoTaqqu1994}. Note they use the terminology isotropy instead of rotational invariance.
%\end{example}

%\begin{lemma}
%This implies that the characteristic function of the finite-dimensional distribution of $M(A_1)$, $\ldots$, $M(A_n)$ for disjoint sets $A_1,\ldots,A_n$ is given as
%\begin{align*}
%&\mathbb{E}\BBase*{\exp \PBase*{i \sum_{j=1}^n \theta_j^{(1)} M^{(1)}(A_j) + \theta_j^{(2)} M^{(2)}(A_j)}} 
%\\
%=& \exp\PBase*{\int_\R K\bigg( \sum_{j=1}^n \theta_j^{(1)} \mathds{1}_{A_j}(s),\sum_{j=1}^n \theta_j^{(2)} \mathds{1}_{A_j}(s),s \bigg) \dif \lambda (s) },
%\end{align*}
%where $\lambda$ denotes a control measure of $M$,
%\begin{align*}
%K(\theta,s)\coloneqq \int_{\R^2} \BBase{e^{i\langle \theta, x\rangle} - 1 - \mathds{1}_{D}(x) \langle \theta, x\rangle} \, \rho(s,dx), \quad \theta \in \R^2, \; s\in\R,
%\end{align*}
%and finally $\rho (s,dx)$ arises similarly to the same term from \cite{RajputRosinski1989} and satisfies for all $s\in\R$ that
%\begin{align*}
%\int_{\R^2} 1\wedge \Vert x \Vert^2 \rho(s,dx) < \infty.
%\end{align*}
%\end{lemma}

Given a complex-valued random measure $M$ we can find a $\sigma$-finite deterministic measure $\lambda$ on $\R$ such that $\lambda (A_n)\to 0$ implies $M(A_n)\to 0$ in probability. We call $\lambda$ a control measure for the random measure $M$. Letting $\nu_A(\cdot)$ denote the Lévy measure of $M(A)$, we can then apply Proposition~2.4 of \cite{RajputRosinski1989} to obtain a decomposition such that 
\begin{align*}
F(A\times B) \coloneqq \nu_A(B) = \int_\R \int_{\R^2} \mathds{1}_{A\times B}(s,x) \, \rho(s,dx) \lambda(ds),
\end{align*}
where $\{ \rho(s,dx)\}_{s\in\R}$ denotes a family of Lévy measures on $\R^2$. For the rest of the paper, we shall use the notation
\begin{align} \label{align:Kfunc}
K(\theta,s)\coloneqq \int_{\R^2} \BBase*{e^{i\langle \theta,x\rangle} -1 -\mathds{1}_{\Set{\Vert x \Vert \leq 1}}\langle \theta,x\rangle} \rho(s,dx), \quad (\theta,s) \in \R^2\times\R.
\end{align}

A simple complex-valued function $f:\R \to \C$ is a function of the (canonical) form
\begin{align} \label{align:simplefunction}
f(s)=\sum_{j=1}^n z_j \mathds{1}_{A_j},
\end{align}
where $n\in\N$, $z_1,\ldots, z_n$ are complex numbers and $A_1,\ldots, A_n$ are disjoint sets from $\mathcal{B}_b(\R)$. For a simple function $f$, of the form \eqref{align:simplefunction},  and $A \in \mathcal{B}(\R)$ we define
\begin{align*}
\int_A f \dif M = \sum_{j=1}^n z_j M(A\cap A_j).
\end{align*}
A (general) measurable function $f:\R \to \C$ is said to be $M$-integrable,  if there exists a sequence of simple function $\Set{f_n}_{n\in\N}$ such that
\begin{enumerate}[label=(\roman*)]
\item $f_n \to f$, $\lambda$-almost surely.
\item for every $A\in\mathcal{B}(\R)$, the sequence $\Set{\int_A f_n \dif M}_{n\in\N}$ converges in probability, as $n\to\infty$.
\end{enumerate}
In the affirmative case, we define
\begin{align*}
\int_A f \dif M \coloneqq \mathbb{P}-\lim_{n\to\infty} \int_A f_n \dif M,
\end{align*}
where $\Set{f_n}$ satisfies (i) and (ii) and $\mathbb{P}-\lim$ denotes limit in probability. It can be shown that this definition does not depend on the approximating sequence $\Set{f_n}$.
%Combining the vector-valued random measure approach in \cite{urbanik1968random} and proceeding as in Chapter 2 of \cite{RajputRosinski1989} it is straightforward to obtain the following theorem. This procedure has also been outlined in Chapter 3.1 of \cite{Rao2012} and Chapter 3 of \cite{SamorodnitskyBook} for more recent references. 
For further details on stochastic integration theory we refer to \cite{RajputRosinski1989}, \cite{SamorodnitskyBook}, \cite{SamoTaqqu1994} and \cite{urbanik1968random}. In the following $\Re(z), \Im(z)$ denotes real, respectively imaginary, part of a complex number $z$.
\begin{theorem} \label{thm:CFforcomplexint}
\ \\ 
(a): % \label{item:thmCF:a} 
Let $f:\R\to\C$ be a measurable function.\ Write $f=f_1+if_2$. Then $f$ is $M$-integrable if the following condition hold true
\begin{align*}
\int_\R \int_{\R^2} \min \bigg( 1,\bigg[\Vert \Big( f_1(s)+f_2(s), \;
  f_1(s)-f_2(s) \Big) \Vert^2\bigg] \Vert x \Vert^2  \bigg) \rho(s,dx) \lambda (ds) < \infty,
\end{align*}
and, in the affirmative case, the characteristic function of $\int_{\R} f \dif M$ is given by
 \begin{align*}
& \mathbb{E} \BBase*{ \exp\PBase*{i\Set*{\theta_1 \Re(\int_\R f \dif M) + \theta_2 \Im(\int_\R f \dif M)}}}
\\ &\qquad 
 = \exp\PBase*{\int_\R K\bigg( \theta_1f_1(s)+\theta_2 f_2(s),\; \theta_2 f_1(s)-\theta_1 f_2(s)\big) ,s\bigg) \, \lambda(ds)}.
\end{align*}
%\item \label{item:thmCF:b}The integral is linear, e.g.\ if $f,g$ are $M$-integrable functions,
%\begin{align*}
%\int_\R a f(s) + bg(s) \dif M_s = a\int_\R f(s) \dif M_s + b\int_\R g(s) \dif M_s, \quad a,\,b\in\C.
%\end{align*}
(b): % \label{item:thmCF:c}
Suppose $f_1,\ldots,f_n$ are $M$-integrable. The joint characteristic function is given by
\begin{align*}
&\mathbb{E}\BBase*{\exp\Set*{i \sum_{j=1}^n \Big(\theta_j^{(1)} \Re(\int f_j \dif M) + \theta_j^{(2)} \Im(\int f_j \dif M)\Big)}}
\\ & \qquad 
= \exp\Big(\int_\R K\bigg(\sum_{j=1} \theta_j^{(1)} f_{j,1} + \theta_j^{(2)}f_{j,2}, \sum_{j=1}^n \theta_j^{(2)} f_{j,1} - \theta_j^{(1)} f_{j,2},s\bigg) \, \lambda(ds)\Big).
\end{align*}
%\end{enumerate}
(c): % asdf
Let $M=M^{(1)}+iM^{(2)}$ be a rotationally invariant complex-valued random measure and let $f:\R\to\C$ be a measurable function. Then the following integrals exists simultaneously and are equal in distribution:
\begin{align}\label{sdfljsldjf}
\int_\R f \dif M \stackrel{d}{=} \int_\R \Vert {f}\Vert \dif M = \int_\R \Vert{f} \Vert \dif M^{(1)} + i \int_\R \Vert{f} \Vert \dif M^{(2)}.
\end{align}
\end{theorem}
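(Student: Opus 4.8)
The plan is to reduce the complex integration theory to the real bivariate theory of Rajput and Rosinski \cite{RajputRosinski1989} by identifying $\C$ with $\R^2$. Writing $M = M^{(1)} + i M^{(2)}$ and $f = f_1 + i f_2$, multiplication by $f(s)$ corresponds to the linear map on $\R^2$ with matrix $A(s) = \bigl(\begin{smallmatrix} f_1(s) & -f_2(s)\\ f_2(s) & f_1(s)\end{smallmatrix}\bigr)$, so that $(\Re \int f\dif M,\ \Im\int f \dif M)$ coincides with the integral of the matrix-valued integrand $A(\cdot)$ against the $\R^2$-valued random measure $(M^{(1)}, M^{(2)})$. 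Complex $M$-integrability of $f$ is thus equivalent to integrability of $A(\cdot)$ in the vector-valued sense, and the whole statement reduces to computing Lévy exponents under this identification.

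For part (a) I would first treat a simple function $f = \sum_j z_j \one_{A_j}$. Then $\int f\dif M = \sum_j z_j M(A_j)$ is a finite sum of independent infinitely divisible variables, so its characteristic function factorises, and the Lévy--Khintchine representation of each $M(A_j)$ in terms of $\rho$ and $\lambda$ via the exponent $K$ of \eqref{align:Kfunc} yields the formula after a direct computation. The key algebraic identity is that pairing the frequency $\theta = (\theta_1, \theta_2)$ with $A(s)x$ gives $\langle A(s)^\top \theta, x\rangle$ with $A(s)^\top \theta = (\theta_1 f_1(s) + \theta_2 f_2(s),\ \theta_2 f_1(s) - \theta_1 f_2(s))$, which is exactly the argument entering $K$ in the statement. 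To pass from simple functions to general $f$ I would invoke the approximation scheme of \cite{RajputRosinski1989}: the stated integrability condition controls the jump part of the Lévy exponent uniformly along the approximating sequence, so that $\int f_n \dif M$ is Cauchy in probability and the characteristic function passes to the limit by continuity. Here it is useful to note $\Vert (f_1 + f_2,\ f_1 - f_2)\Vert^2 = 2\Vert f\Vert^2$, so the condition is the expected one in $\Vert f \Vert$. I expect this approximation/convergence step, together with the bookkeeping of the truncation term, to be the main technical obstacle.

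Part (b) follows by the same route. For simple functions $f_1, \ldots, f_n$ the joint law of the integrals is again governed by independent infinitely divisible increments, and the real linear functional $\sum_j (\theta_j^{(1)} \Re \int f_j \dif M + \theta_j^{(2)} \Im \int f_j \dif M)$ pairs with $x$ through $\sum_j A_j(s)^\top \theta_j$, whose two coordinates are precisely the summed arguments appearing inside $K$ in the statement. Passing to the limit as in part (a) then gives the joint characteristic function in general.

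For part (c) I would first translate rotational invariance of $M$ into rotational invariance of the local Lévy measures: the relation $e^{i\vartheta} M(A) \stackrel{d}{=} M(A)$ forces $\rho(s, \cdot)$ to be invariant under the $SO(2)$-action on $\R^2$ for $\lambda$-a.e.\ $s$, and consequently $K(\theta, s)$ depends on $\theta$ only through $\Vert\theta\Vert$. Using the identity established in (a), the effective argument for $\int f \dif M$ has squared norm $(\theta_1 f_1 + \theta_2 f_2)^2 + (\theta_2 f_1 - \theta_1 f_2)^2 = \Vert\theta\Vert^2 \Vert f\Vert^2$, while for $\int \Vert f\Vert \dif M$ (a real scalar integrand, so $A(s) = \Vert f(s)\Vert I$) the effective argument $\Vert f(s)\Vert\,\theta$ has the same squared norm $\Vert\theta\Vert^2\Vert f\Vert^2$. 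Since $K$ is radial in its first argument, the two characteristic functions coincide, giving equality in distribution; simultaneity of existence follows because the integrability condition of (a) depends on $f$ only through $\Vert f\Vert$. The only genuinely new point here is the passage from rotational invariance of $M$ to that of $\rho$, which I would establish by comparing the Lévy exponents of $M(A)$ and $e^{i\vartheta}M(A)$.
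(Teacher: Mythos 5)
Your proposal is correct and follows essentially the same route as the paper, whose proof simply defers to Theorem~2.7 of \cite{RajputRosinski1989}, Proposition~6.2.1 of \cite{SamoTaqqu1994} and \cite{urbanik1968random}; your identification of $\C$ with $\R^2$ via the matrix $A(s)$ and the computation $\langle\theta,A(s)x\rangle=\langle A(s)^{\top}\theta,x\rangle$ is exactly the content of those citations, and your self-contained derivation of (c) from radiality of $K$ is the substance of Urbanik's argument. The only point worth making explicit is that simultaneity of existence in (c) needs the integrability criterion to be necessary as well as sufficient, which holds here because $M$ is symmetric without Gaussian component, so the full Rajput--Rosi\'nski conditions collapse to the single jump condition depending on $f$ only through $\Vert f\Vert$.
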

\begin{proof}
(a) follows from the same steps as Theorem 2.7 in \cite{RajputRosinski1989} using complex-valued functions instead. (b) follows by the same steps as in the proof for Proposition 6.2.1(iii) of \cite{SamoTaqqu1994}. (c) follows by closely examining the results and arguments in \cite{urbanik1968random}.
\end{proof}
Often it is easier to think of the complex-valued stochastic integral as
\begin{align*}
\int_\R f \dif M_s =& \int_\R (f_1+if_2) \dif (M^{(1)}+iM^{(2)})
\\
=& \int_\R f_1 \dif M^{(1)} - \int_\R f_2 \dif M^{(2)} + i\PBase*{\int_\R f_1 \dif M^{(2)} + \int_\R f_2 \dif M^{(1)}}
\end{align*}
and show existence for each of the above four integrals separately (this is a more strict existence criterion). As a consequence of $(c)$, it is also necessary to prove existence of all of these four integrals, when $M$ is a rotationally invariant random measure.

%%%%%%%%%%
\section{Existence and properties of  harmonizable fractional L\'evy motions}
%%%%%%%%%%%%%%%%

Recall that a harmonizable fractional Lévy motion $(X_t)_{t\in \R}$ is defined by  
\begin{align*}
X_t=\int_\R \frac{e^{its}-1}{is} \PBase*{a(s_+)^{-H-1/\alpha+1} + b(s_-)^{-H-1/\alpha+1}} \dif L_s, \quad t\in\R,
\end{align*}
where $L$ is a rotational invariant complex-valued Lévy process. Our next result gives a general existence criterion for harmonizable fractional L\'evy motions together with some properties.

\begin{theorem} \label{thm:hflm}
Let $(L_t)_{t\in \R}$ be a complex-valued rotational invariant L\'evy process without Gaussian component. 
The harmonizable fractional Lévy motion $(X_t)_{t\in \R}$, defined in Definition~\ref{def:hflm},  with parameters $(\alpha, H)\in (0,2)\times (0,1)$ exists if both of the  following (a)--(b) are satisfied: 
\begin{enumerate}
\item $\int_{\abs{x} >1}\, \abs{x}^{\frac{1}{H+1/\alpha}} \, \nu_R (dx)<\infty$, 
\item 
$
\int_{\abs{x} \leq 1}\, \abs{x}^{\frac{1}{H+1/\alpha-1}} \, \nu_R (dx)<\infty$, 
\end{enumerate}
where $\nu_R$ denotes the L\'evy measure of the  real-part of $(L_t)_{t\in \R}$. Furthermore, if  $X$ exists, then it has stationary increments, rotational invariant distribution and the characteristic function is given by
\begin{align*}
{}&\mathbb{E}\BBase*{\exp\Set*{i\left\langle \theta, \Big(\Re(X_t),\Im(X_t)\Big)\right\rangle}}
\\ {}&\qquad 
= \exp\PBase*{\int_\R K\bigg( \theta_1f_1(s)+\theta_2 f_2(s),\; \theta_2 f_1(s)-\theta_1 f_2(s) ,s\bigg) \, \lambda(ds)},
\end{align*}
for all  $\theta=(\theta_1,\theta_2)\in \R^2$, where $K$ is given by (\ref{align:Kfunc}).
\end{theorem}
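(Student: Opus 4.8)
The plan is to realize $X_t=\int_\R f_t\dif L$ for the kernel $f_t(s)=\tfrac{e^{its}-1}{is}\,h(s)$ with $h(s)=a(s_+)^{1-\kappa}+b(s_-)^{1-\kappa}$ and $\kappa:=H+1/\alpha$, and to dispose of existence before deriving the three stated properties. Since $(L_t)$ is a Lévy process, its random measure $M$ is homogeneous, so in the notation of Section~\ref{sect:harmproc} the control measure $\lambda$ is Lebesgue measure and $\rho(s,\cdot)=\nu$ for every $s$. As $M$ is rotationally invariant, Theorem~\ref{thm:CFforcomplexint}(c) allows me to replace the complex integral by the real integral $\int_\R\Vert f_t\Vert\dif M^{(1)}$, where $M^{(1)}=\Re M$ is a homogeneous real random measure with control measure Lebesgue and Lévy measure $\nu_R$. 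Because rotational invariance makes $\nu_R$ symmetric and there is no Gaussian part, the drift and Gaussian terms in the (real) Rajput--Rosinski integrability criterion \cite{RajputRosinski1989} vanish, and existence of $X_t$ reduces to
\[
\int_\R\int_\R\min\bigl(1,\Vert f_t(s)\Vert^2x^2\bigr)\,\nu_R(dx)\,ds<\infty.
\]

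The step I expect to be the main obstacle is showing that conditions (a)--(b) are exactly what make this double integral finite. First I would record the envelope bound $\Vert f_t(s)\Vert\le C\min\bigl(|t|\,|s|^{1-\kappa},|s|^{-\kappa}\bigr)$, which follows from $|e^{its}-1|/|s|\le\min(|t|,2/|s|)$ and $|h(s)|\le C|s|^{1-\kappa}$; since $u\mapsto\int_\R\min(1,u^2x^2)\,\nu_R(dx)$ is nondecreasing, it suffices to control the integral with $\Vert f_t\Vert$ replaced by this envelope. By Tonelli I would then study $J(x):=\int_\R\min(1,\Vert f_t(s)\Vert^2x^2)\,ds$, splitting the $s$-integral at $|s|=1$: on $\{|s|>1\}$ the decay $\Vert f_t(s)\Vert\asymp|s|^{-\kappa}$ governs, while on $\{|s|\le1\}$ the singularity $\Vert f_t(s)\Vert\asymp|t|\,|s|^{1-\kappa}$ governs. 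Computing the resulting one-dimensional integrals (splitting each at the point where $\Vert f_t(s)\Vert\,|x|=1$ and using $\kappa>1/2$ for convergence of the tails) should give $J(x)\le C(|x|^{1/\kappa}+1)$ for $|x|\ge1$ and $J(x)\le C(x^2+|x|^{1/(\kappa-1)})$ for $|x|\le1$. Integrating against $\nu_R$,
\begin{align*}
\int_\R J(x)\,\nu_R(dx)
&\le C\Bigl(\int_{\{|x|>1\}}|x|^{1/\kappa}\,\nu_R(dx)+\int_{\{|x|\le1\}}|x|^{1/(\kappa-1)}\,\nu_R(dx)\\
&\qquad{}+\int_{\{|x|\le1\}}x^2\,\nu_R(dx)+\nu_R(\{|x|>1\})\Bigr),
\end{align*}
where the last two terms are finite for any Lévy measure and the first two are finite precisely by (a) and (b). Hence (a)--(b) give $M$-integrability of $f_t$, so $X$ is well defined; note that the exponents $1/\kappa=1/(H+1/\alpha)$ and $1/(\kappa-1)=1/(H+1/\alpha-1)$ come out of the far- and near-field behaviour of the kernel, respectively.

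The three properties then follow from Theorem~\ref{thm:CFforcomplexint}. The characteristic-function formula is precisely part~(a) specialized to $f=f_t$ and $\lambda$ Lebesgue. For stationary increments I would use $f_{t+h}(s)-f_{t'+h}(s)=e^{ihs}\bigl(f_t(s)-f_{t'}(s)\bigr)$, so the kernels of the increments of the shifted process differ from those of $X$ only by a common unimodular factor $e^{ihs}$. Since $\nu$ is rotationally invariant, $K(\cdot,s)$ depends on its first argument only through its norm, and in the joint characteristic function of part~(b) that argument, read as a complex number, is $\sum_j\theta_j\overline{f_j(s)}$; multiplying every kernel by $e^{ihs}$ changes this only by the unimodular factor $e^{-ihs}$ and hence leaves every $K$-term, and thus the whole joint law, invariant. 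Finally, rotational invariance is immediate: any complex linear combination equals $\int_\R\bigl(\sum_n z_nf_{t_n}\bigr)\dif M$, and multiplying the kernel by $e^{i\theta}$ preserves its modulus, so part~(c) yields $e^{i\theta}\sum_n z_nX_{t_n}\stackrel{d}{=}\sum_n z_nX_{t_n}$.
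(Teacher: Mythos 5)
Your proposal is correct and follows essentially the same route as the paper: existence is reduced, via rotational invariance and the Rajput--Rosi\'nski criterion, to bounding $\int_\R\int_\R\min\bigl(1,\Vert f_t(s)\Vert^2x^2\bigr)\,\nu_R(dx)\,ds$ through the power-law envelopes of the kernel at $0$ and at infinity, which is exactly the content of the paper's Lemma~\ref{lemma:asymptexistence} (applied with $\beta=1-H-1/\alpha$ and $\gamma=H+1/\alpha$), while the characteristic function and the rotational invariance are read off from Theorem~\ref{thm:CFforcomplexint}(a) and (c) just as in the paper. The only real divergence is the stationary-increments claim, where the paper cites Urbanik's Theorem~4.1, whereas you give a short self-contained argument from the identity $f_{t+h}-f_{t'+h}=e^{ihs}(f_t-f_{t'})$ together with the fact that $K(\cdot,s)$ depends on its first argument only through its modulus; that is a clean and correct substitute for the citation.
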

 
 To prove Theorem~\ref{thm:hflm} we will first show the following lemma. 
 In this result, and in the following, we will write $f(t)\sim g(t)$ as $t \to a$ for real-valued functions $f$ and $g$, if  $\lim_{t\to a} (f(t)/g(t))=c$ for some constant $c\neq 0$. 
%Due to Theorem~\ref{thm:CFforcomplexint} it is sufficient to show and apply the following result  in order to prove Theorem \ref{thm:hflm}.

\begin{lemma} \label{lemma:asymptexistence}
Let $L$ be a real-valued symmetric Lévy process without Gaussian component and L\'evy measure $\nu$.  Let $f:\R\to\R$ be a measurable function, bounded on $[-1,1]^c$, and  satisfying 
\begin{equation}
\abs{f(s)}\sim \abs{s}^{\beta} \text{ as } s \to 0 \qquad \text{and}\qquad \abs{f(s)}\sim \abs{s}^{-\gamma} \text{ as } |s|\to  \infty, 
\end{equation}
for some $\beta\leq 0$ and $\gamma>0$. 
Then the stochastic integral $\int f \dif L$ exists if and only if the following two conditions (a) and (b) are satisfied:
\begin{enumerate}
\item \label{item:ii:lemma:asymptexistence} $\gamma > 1/2$ and the following condition hold true
\begin{align*}
\int_{\abs{x}>1}\abs{x}^{\frac{1}{\gamma}} \, \nu (dx) < \infty.
\end{align*}
\item \label{item:i:lemma:asymptexistence}  We have that 
\begin{align*}
\int_{\abs{x}\leq 1} \abs{x}^{\frac{1}{-\beta}} \, \nu (dx) < \infty.
\end{align*}
\end{enumerate}
\end{lemma}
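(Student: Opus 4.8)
The plan is to reduce the existence of $\int f \dif L$ to the finiteness of a purely deterministic double integral via the Rajput–Rosinski integrability criterion, and then to extract conditions \ref{item:ii:lemma:asymptexistence} and \ref{item:i:lemma:asymptexistence} by Tonelli's theorem together with a two-sided asymptotic analysis of the resulting inner integral. First I would record that, since $L$ is a real-valued symmetric L\'evy process without Gaussian component, it is an infinitely divisible random measure whose control measure is Lebesgue measure and whose local L\'evy measure is $\nu$, independent of $s$. Invoking the real-valued analogue of Theorem~\ref{thm:CFforcomplexint}(a) (that is, Theorem~2.7 of \cite{RajputRosinski1989}) and noting that symmetry annihilates the drift term while the absence of a Gaussian component annihilates the quadratic term, I obtain that $\int f \dif L$ exists if and only if
\[
I:=\int_\R\int_\R \min\bigl(1,\,f(s)^2x^2\bigr)\,\nu(dx)\,ds<\infty.
\]

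Since the integrand is nonnegative, Tonelli's theorem lets me swap the order of integration, writing $I=\int_\R h(x)\,\nu(dx)$ with $h(x):=\int_\R \min(1,f(s)^2x^2)\,ds$. The whole problem then becomes understanding the growth of $h$ as $\abs{x}\to\infty$ and its decay as $x\to 0$, and reading off when $h$ is $\nu$-integrable. Using the hypotheses, I fix $0<c_1<c_2$ and $0<\delta<1<R$ so that $c_1\abs{s}^{\beta}\le\abs{f(s)}\le c_2\abs{s}^{\beta}$ on $0<\abs{s}\le\delta$ and $c_1\abs{s}^{-\gamma}\le\abs{f(s)}\le c_2\abs{s}^{-\gamma}$ on $\abs{s}\ge R$, while on the intervening region $f$ is bounded; the bounded region contributes only $O(1)$ for large $\abs{x}$ and $O(x^2)$ for small $x$, and hence never affects the relevant conditions.

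The core of the argument is to split each one-dimensional integral at the value of $\abs{s}$ where $f(s)^2x^2=1$. For the tail $\abs{s}\ge R$ this produces (up to constants) $\int_R^\infty \min(1,s^{-2\gamma}x^2)\,ds$, which is finite only when $2\gamma>1$; in that case it behaves like $\abs{x}^{1/\gamma}$ as $\abs{x}\to\infty$ and like $x^2$ as $x\to 0$. This forces $\gamma>1/2$ and gives the large-$\abs{x}$ growth $h(x)\asymp\abs{x}^{1/\gamma}$, which against $\int_{\abs{x}>1}(\cdot)\,\nu(dx)$ yields exactly condition~\ref{item:ii:lemma:asymptexistence}. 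For the region near the origin, $\int_{\abs{s}\le\delta}\min(1,\abs{s}^{2\beta}x^2)\,ds$ behaves like $\abs{x}^{1/(-\beta)}$ as $x\to 0$ (and remains bounded as $\abs{x}\to\infty$), so that $h(x)\asymp\abs{x}^{\min(2,\,1/(-\beta))}$ near $0$; since $\int_{\abs{x}\le1}x^2\,\nu(dx)<\infty$ always holds for a L\'evy measure, the binding requirement is exactly condition~\ref{item:i:lemma:asymptexistence}.

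Assembling these two-sided estimates into $I=\int_\R h\dif\nu$ shows $I<\infty$ is equivalent to $\gamma>1/2$ together with the two tail integrals being finite, i.e.\ to \ref{item:ii:lemma:asymptexistence} and \ref{item:i:lemma:asymptexistence}. The main obstacle I anticipate is making the asymptotic analysis of $h$ fully two-sided and uniform so that the equivalence is genuinely an ``if and only if'' in both directions, rather than a one-sided bound; in particular, the truncation in the $\min$ must be tracked carefully, the bookkeeping of which regime dominates for small versus large $x$ must be kept consistent, and the critical exponents $2\gamma=1$ and $2\beta=-1$ (where logarithmic factors enter) as well as the degenerate endpoint $\beta=0$ (where condition~\ref{item:i:lemma:asymptexistence} becomes vacuous) require separate attention.
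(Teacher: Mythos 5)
Your proposal is correct and follows essentially the same route as the paper: both reduce existence to the Rajput--Rosi\'nski criterion $\int_\R\int_\R\min(1,|xf(s)|^2)\,\nu(dx)\,ds<\infty$ and then evaluate the inner $s$-integral for fixed $x$ by splitting at the level set $f(s)^2x^2=1$, extracting the exponents $|x|^{1/\gamma}$ (large $x$, forcing $\gamma>1/2$) and $|x|^{1/(-\beta)}\wedge x^2$ (small $x$). The only cosmetic difference is that the paper organizes the computation over four quadrants of the $(s,x)$-plane after replacing $f$ by a canonical comparison function, while you phrase it via Tonelli and the asymptotics of $h(x)=\int\min(1,f(s)^2x^2)\,ds$; your explicit two-sided constants $c_1,c_2$ are, if anything, slightly more careful for the ``only if'' direction.
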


If $\sim$ in Lemma~\ref{lemma:asymptexistence} is replaced by $f(s) = O(|s|^\beta)$ as $s\to 0$,  or 
$f(s)=O(|s|^{-\gamma})$ as $|s|\to \infty$,  the criteria for  existence of the integral $\int f\,dL$ remain sufficient. Note that if $\beta > -1/2$, the second criterion holds for any Lévy measure.

\begin{proof}[Proof of Lemma \ref{lemma:asymptexistence}]
Writing out the conditions in Theorem 2.7 of \cite{RajputRosinski1989} and observing that these are increasing in the function $f$, it suffices to study these conditions for a function $g(s)\coloneqq \mathds{1}_{\interval{-1}{1}}(s)\abs{s}^{\beta} + \mathds{1}_{\interval{-1}{1}^c}(s) \abs{s}^{-\gamma}$. Recall that the general condition for existence of $\int g \dif L$ is given by
\begin{align} \label{align:levyintegralexist}
\int_{\R} \int_{\R} \min (1, \abs{xg(s)}^2) \, \nu(dx) \, \lambda (ds)<\infty,
\end{align}
where $\nu$ denotes the Lévy measure of $L$ and $\lambda$ denotes the Lebesgue measure. Divide this condition into the following four areas, 
\begin{align*}
A_{11}&=\Set{(s,x)\in \R\times \R: \abs{s}\leq 1,\ \abs{x}\leq 1},
\\
A_{12}&=\Set{(s,x)\in \R\times \R: \abs{s}\leq 1,\ \abs{x}> 1},
\\
A_{21}&=\Set{(s,x)\in \R\times \R: \abs{s}> 1,\ \abs{x}\leq 1},
\\
A_{22}&=\Set{(s,x)\in \R\times \R: \abs{s}> 1,\ \abs{x}> 1}.
\end{align*}
The monotonicity of $g$ on these sets can then be used to simplify the condition in (\ref{align:levyintegralexist}) into  \ref{item:ii:lemma:asymptexistence} and \ref{item:i:lemma:asymptexistence}. 
%\begin{align*}
%&\int_{\abs{s} > 1} \int_{\abs{x} > 1} \min (1, \abs{xf(s)}^2) \, \nu(dx) \, \lambda (ds)
%\\
%=& \int_{\abs{x}>1} \int_{\abs{s} >1} \min (1, \abs{xf(s)}^2) \, \lambda (ds)\, \nu(dx)
%\end{align*}
We first consider $A_{22}$ and let $x\in \interval{-1}{1}^c$ be given. Divide the inner integral into
\begin{align*}
&\int_{\Set*{\abs{s}  > 1} \cap \Set{ \abs{s} > \abs{x}^{1/\gamma}}} \abs{x}^2 \abs{s}^{-2\gamma} \, \lambda(ds) +\int_{\Set{\abs{s}>1} \cap \Set{\abs{s} \leq \abs{x}^{1/\gamma}}} 1 \, \lambda(ds)
\\ & \qquad 
= \abs{x}^2 \int_{\abs{x}^{1/\gamma}}^{\infty} \abs{s}^{-2\gamma} \, \lambda(ds)   +2\lambda \PBase{\interval[open left]{1}{\abs{x}^{1/\gamma}}},
\\ & \qquad 
= \abs{x}^2 \BBase*{\frac{2}{-2\gamma +1}s^{-2\gamma+1}}_{\abs{x}^{1/\gamma}}^{\infty} + 2(\abs{x}^{1/\gamma}-1)
\\ & \qquad 
= \abs{x}^2 \abs{x}^{-2 + 1/\gamma} \frac{-2}{-2\gamma +1} + 2(\abs{x}^{1/\gamma}-1)= 3 \abs{x}^{1/\gamma} -2,
\end{align*}
where we have used that  $\gamma > 1/2$ to ensure the finiteness of the integral and afterwards that $\frac{2}{-2\gamma+1}<0$.
Inserting the derived into the original criterion on the set $A_{22}$, we get that
\begin{align*}
\int_{\abs{x}>1} \BBase*{3\abs{x}^{1/\gamma} - 2} \, \nu (dx) < \infty.
\shortintertext{Since the area $\abs{x}>1$ is of finite $\nu$-measure, this reduces to}
\int_{\abs{x}> 1} \abs{x}^{1/\gamma} \, \nu (dx) < \infty,
\end{align*}
which is one of the stated criterions. For $A_{11}$, let $x\in \interval{-1}{1}$ be given and assume that $\beta < 0$. The inner integral can in this case be written as
\begin{align*}
& \int_{\Set{\abs{s} \leq 1 }\cap \Set{\abs{s}^{\beta}\leq \abs{x}^{-1}}} \abs{xg(s)}^2 \, \lambda (ds) + \int_{\Set{\abs{s} \leq 1}\cap \Set{\abs{s}^{\beta} > \abs{x}^{-1}}}\lambda (ds)
\\
={}&  \abs{x}^2 \int_{1 \geq  \abs{s} \geq \abs{x}^{-1/\beta}} \abs{s}^{2\beta} \, \lambda(ds) +  \int_{\Set{\abs{s}\leq 1}\cap \Set{\abs{s} \leq \abs{x}^{-1/\beta}}} \lambda (ds)
\\
={}& \abs{x}^2 \frac{2}{2\beta+1}\BBase*{s^{2\beta +1}}_{\abs{x}^{-1/\beta}}^{1} + 2 \lambda(\interval{0}{\abs{x}^{-1/\beta}}).
\end{align*}
Inserting this into the outer integral we obtain
\begin{align*}
\int_{\abs{x} \leq 1}\PBase*{\abs{x}^2 \frac{2}{2\beta+1}\BBase*{s^{2\beta +1}}_{\abs{x}^{-1/\beta}}^{1} + 2 \lambda(\interval{0}{\abs{x}^{-1/\beta}})} \, \nu (dx)
\end{align*}
which reduces to the second condition by applying the definition of a Lévy measure. For $\beta = 0$, the proof is trivial. For $A_{12}$, let $x \in \interval{-1}{1}^c$ be given. We can again rewrite the inner integral into
\begin{align*}
&\int_{\Set{\abs{s} \leq 1}\cap \Set{\abs{s}^{-\gamma}\leq \abs{x}^{-1}}}  \abs{x}^2 \abs{s}^{-2\gamma}\lambda(ds)+ \int_{\Set{\abs{s} \leq 1}\cap \Set{\abs{s}^{-\gamma} > \abs{x}^{-1}}} \lambda (ds)
\\
={}& \int_{\Set{\abs{s} \leq 1} \cap \Set{\abs{s} \geq \abs{x}^{1/\gamma}}} \abs{x}^2 \abs{s}^{-2\gamma} \, \lambda(ds)  +  \int_{\Set{\abs{s} \leq 1} \cap \Set{\abs{s} < \abs{x}^{1/\gamma}}} \lambda (ds)
\\
={}& 0 + \lambda(\interval{0}{1}) ,
\end{align*}
where we used that $\abs{x} > 1$. Inserting this into the outer integral reduces to a trivial condition for Lévy measures. For the last area, $A_{21}$, let $x\in \interval{-1}{1}$ be given. In this case the condition similarly reduces to
\begin{align*}
\int_{\abs{x} \leq 1} \abs{x}^{2} \, \nu (dx) < \infty,
\end{align*}
which is trivial. This concludes the proof.
\end{proof}

\begin{proof}[Proof of Theorem~\ref{thm:hflm}]
Let $f$ denote the integrand of the harmonizable fractional L\'evy motion. Observe that $f$ is bounded on $[-1,1]^c$, and 
\begin{align*}
f(s)=O(\abs{s}^{-H-1/\alpha}) \text{ as }|s|\to \infty, \quad \text{and}  \quad f(s)=O(\abs{s}^{1-H-1/\alpha}) \text{ as }s\to 0.
\end{align*}
The existence criteria  now follows by Lemma~\ref{lemma:asymptexistence}. The stationary increments follows by a straightforward extension of Theorem 4.1 in \cite{urbanik1968random} to continuous time, see also Theorem 6.5.1 in \cite{SamoTaqqu1994} for the stable case. The isotropic distribution follows immediately from $(c)$ in Theorem \ref{thm:CFforcomplexint}.
\end{proof}

We are now ready to complete the proof of our main result.
\begin{proof}[Proof of Theorem \ref{thm:lass}]
We study the characteristic function of the finite dimensional distributions for the left-hand side of (\ref{align:LASSpropertyofHFLM}) and show convergence towards the characteristic function of harmonizable fractional stable motion.  The characteristic function for the finite dimensional distribution of (\ref{align:LASSpropertyofHFLM}) is given  by Theorem~\ref{thm:CFforcomplexint}. For $(\theta^{(1)}_j,\theta^{(2)}_j)\in \R^2$ for $j=1,\dots,n$, we have that 
\begin{align} \label{align:lassCF}
 A_\epsilon:={}& \log \Aver*{\exp\PBase*{i\sum_{j=1}^n \BBase*{\theta_j^{(1)} \frac{\Re(X(\epsilon t_j))}{\epsilon^H}+\theta_j^{(2)} \frac{\Im(X(\epsilon t_j))}{\epsilon^H}}}} \nonumber
\\
={}& \int_\R \psi \Bigg( \epsilon^{-H} \sum_{j=1}^n \theta_j^{(1)} f_{\epsilon t_j,1}(s) +  \epsilon^{-H}\sum_{j=1}^n \theta_j^{(2)} f_{\epsilon t_j,2}(s), \nonumber 
\\
&\quad  \quad \quad  \epsilon^{-H} \sum_{j=1}^n \theta_j^{(2)} f_{\epsilon t_j,1}(s) - \epsilon^{-H} \sum_{j=1}^n \theta_j^{(1)} f_{\epsilon t_j,2}(s)\Bigg) \dif s,
\end{align}
where $f_{\epsilon t_j,1}, f_{\epsilon t_j,2}$ denotes the real, respectively imaginary, part of integrand $f_{\epsilon t_j}$ for $X_{\epsilon t_j}$ and with $z=(z_1,z_2)$ 
\begin{align*}
\psi (z_1,z_2) \coloneqq \int_{\R^2} \BBase*{e^{i\langle z,x\rangle} -1 -\mathds{1}_{\Set{\Vert x \Vert \leq 1}}(x)\langle z , x \rangle}\, \nu (dx)
\end{align*}
 Writing $u=\epsilon s$, we substitute the $\epsilon$ out of the time index of $f$ and obtain
\begin{align*}
f_{\epsilon t}(s) = \frac{e^{i\epsilon ts}-1}{is} \PBase*{a(s_+)^{-H-1/\alpha + 1}+b(s_-)^{-H-1/\alpha + 1}}= f_t(u) \epsilon^{H+1/\alpha}.
\end{align*}
Making the substitution $u=\epsilon s$ in equation (\ref{align:lassCF}) thus yields that
\begin{align*}
A_\epsilon =&\int_{\R} \psi \left( \epsilon^{H+1/\alpha - H} \PBase*{ \sum_{j=1}^n \theta_j^{(1)}  f_{t_j,1}(u) +  \sum_{j=1}^n \theta_j^{(2)} f_{t_j,2}(u)},  \right.
\\
&\phantom{\int_{\R} \psi \Big(\ }\left. \epsilon^{H+1/\alpha - H} \PBase*{\sum_{j=1}^n \theta_j^{(2)} f_{t_j,1}(u) - \sum_{j=1}^n \theta_j^{(1)} f_{t_j,2}(u)} \right) \epsilon^{-1}\dif u. 
\end{align*}
To simplify notation, define $g_{\theta,t}(u)\in \R^2$ by 
\begin{align} \label{align:gfunc}
\bigg( \Big( \sum_{j=1}^n \theta_j^{(1)}  f_{t_j,1}(u) +  \sum_{j=1}^n \theta_j^{(2)} f_{t_j,2}(u)\Big),  \Big( \sum_{j=1}^n \theta_j^{(2)} f_{t_j,1}(u) - \sum_{j=1}^n \theta_j^{(1)} f_{t_j,2}(u) \bigg), 
\end{align}
and let  $k(z,x)\coloneqq e^{i\langle z,x\rangle} -1 -i\mathds{1}_{D^c}(x) \langle z,x \rangle$. Inserting the defined notation, this implies that we may rewrite the characteristic function to
\begin{align} \label{align:defofh}
A_{\epsilon} = &\int_\R \int_{\R^2} k(\epsilon^{1/\alpha}g_{\theta,t}(u),x)\, \nu (dx) \epsilon^{-1}\dif u  \nonumber
\\
=&\int_\R \int_{\R^2} k(\epsilon^{1/\alpha}g_{\theta,t}(u),x)  \, f(x) \dif x \, \epsilon^{-1} \dif u \nonumber
\\
=&
\int_\R \int_{\R^2} k(g_{\theta,t}(u),x) f(\epsilon^{-1/\alpha}x) \epsilon^{2(-1/\alpha)} \dif x \dif \epsilon^{-1} \dif u,
\end{align}
where we used $\nu (dx)=f(x) \dif x$ and a simple scaling of parameters in $\R^2$.
The next step is to show pointwise convergence of the integrand as $\epsilon \to 0_+$. After this we apply the dominated convergence theorem to insert the found limit under the integral. We postpone the argument for dominated convergence theorem until the end of this proof. Assumption~(A) on the Lévy measure $\nu$ gives us that for every $\delta>0$ we can find $R_{\delta} > 0$ such that 
\begin{align*}
1-\delta \leq \frac{f(x)}{\Vert x \Vert^{-2-\alpha}}\leq 1+\delta, \quad \text{for }\Vert x \Vert \geq R_{\delta}.
\end{align*}
Fix $x\in \R^2\setminus \Set{0}$ and $u\in\R$. For every $\delta > 0$ we can choose $\epsilon$ sufficiently small such that $\Vert \epsilon^{-1/\alpha} x \Vert > R_{\delta}$, which implies that
\begin{align*}
1-\delta \leq\frac{f(\epsilon^{-1/\alpha}x)e^{-2/\alpha-1}}{\Vert x \Vert^{-2-\alpha}}=\frac{f(\epsilon^{-1/\alpha}x)}{\Vert \epsilon^{-1/\alpha}x\Vert^{-2-\alpha} }\leq 1+\delta.
\end{align*}
Thus in the limit we find that
\begin{align*}
&\lim_{\epsilon \downarrow 0} f(\epsilon^{-1/\alpha}x)\epsilon^{-2/\alpha -1} = \Vert x \Vert^{-2-\alpha},
\shortintertext{and hence }
&\lim_{\epsilon \downarrow 0} k(g_{\theta,t}(u),x)f(\epsilon^{-1/\alpha}x)\epsilon^{-2/\alpha -1} = \Vert x \Vert^{-2-\alpha}k(g_{\theta,t}(u),x).
\end{align*}
This finishes the proof of pointwise convergence for $f_\epsilon$ as $\epsilon \to 0$. 
Applying the dominated convergence theorem we find that
\begin{align} \label{align:CFofLASSlimit}
&\lim_{\epsilon \downarrow 0} \log \Aver*{\exp\PBase*{i\sum_{j=1}^n \BBase*{\theta_j^{(1)} \frac{\Re(Y(\epsilon t_j))}{\epsilon^K}+\theta_j^{(2)} \frac{\Im(Y(\epsilon t_j))}{\epsilon^K}}}} \nonumber
\\ &\qquad 
=\lim_{\epsilon \downarrow 0}\int_\R \int_{\R^2} k\PBase*{g_{\theta,t}(u),x}
f(e^{-1/\alpha}x) \epsilon^{-2/\alpha-1} \dif x \dif u \nonumber
\\   &\qquad 
= \int_\R \int_{\R^2} k\PBase*{g_{\theta,t}(u),x} \Vert x \Vert^{-2-\alpha} \dif x \dif u.
\end{align}
The book \cite{applebaum2009levy}, p.~37, identifies $\Vert x \Vert^{-2-\alpha}$ in (\ref{align:CFofLASSlimit}) as the Lévy measure of a rotationally invariant two-dimensional $\alpha$-stable Lévy process. We can continue our derivations in polar coordinates and observe that the inner integral may be rewritten as
\begin{align*} 
 {}&\int_{\R^2} k\PBase*{g_{\theta,t}(u),x} \Vert x \Vert^{-2-\alpha} \dif x  
\\
={}& \int_0^{2\pi}  \int_0^\infty k\PBase*{g_{\theta,t}(u),r(\cos(s),\sin(s))} \, r^{-1-\alpha} \dif r \dif s
\\
={}&  \int_{0}^{2\pi} -c_0 \abs{\langle g_{\theta,t}(u), \big( \cos(s),\sin(s)\big) \rangle}^{\alpha} \dif s.
\end{align*}
Here we used the following result, which follows by substituting $z=yr$,
\begin{align*}
- c_0 \abs{y}^{\alpha} = \int_0^\infty \BBase*{\exp(iyr)-1 -i \mathds{1}_{\Set{\Vert r \Vert \leq 1}}(x) yr} r^{-1-\alpha} \dif r,
\end{align*}
where $c_0\coloneqq \int_0^\infty \BBase{\cos(r)-1}\, r^{-1-\alpha} \dif r$. Write $g_{\theta,t}(u)=\Vert g_{\theta,t}(u)\Vert(\cos(\kappa_u),\sin(\kappa_u))$ in polar form for some $\kappa_u$. Inserting this notation and applying a standard trigonometric rule, we obtain
\begin{align*}
{}&  \int_{0}^{2\pi} -c_0 \abs{\langle g_{\theta,t}(u), \big( \cos(s),\sin(s)\big) \rangle}^{\alpha} \dif s.
\\
={} &  - c_0 \Vert g_{\theta,t}(u)\Vert^{\alpha} \int_0^{2\pi} \abs{\langle(\cos(\kappa_u),\sin(\kappa_u)), (\cos(s),\sin(s))\rangle}^{\alpha} \dif s
\\
={} & - c_0 \Vert g_{\theta,t}(u)\Vert^{\alpha} \int_0^{2\pi} \abs{\cos(\kappa_u) \cos(s) + \sin(\kappa_u)\sin(s)}^{\alpha} \dif s
\\
={} &  - c_0 \Vert g_{\theta,t}(u)\Vert^{\alpha} \int_0^{2\pi} \abs{\cos(s-\kappa_u)}^{\alpha} \dif s = - c_0 \Vert g_{\theta,t}(u)\Vert^{\alpha} c_1,
\end{align*}
where $c_1 = \int_0^{2\pi} \abs{\cos(s)}^{\alpha} \dif s$. Inserting this into~(\ref{align:CFofLASSlimit}), we identify the characteristic function as 
\begin{align*}
\exp \PBase*{ - c_0c_1 \int_{\R} \Vert g_{\theta,t}(u)\Vert^{\alpha} \dif s}
\end{align*}
which is the characteristic function of harmonizable fractional stable motion stated in Theorem 6.3.4 of \cite{SamoTaqqu1994} and on p.\ 359 of the same book when we insert $g_{\theta,t}$ (up to a scaling factor). 
Thus all that remains is the argument for dominated convergence theorem in equation~(\ref{align:CFofLASSlimit}). By assumption there exists a $C > 0$ such that $f(x) \leq C\Vert x \Vert^{-2-\alpha}$ for all $x\in\R$. This implies that 
\begin{align*}
f(\epsilon^{-1/\alpha}x)e^{-2/\alpha-1} \leq C \Vert \epsilon^{-1/\alpha} x \Vert^{-2-\alpha} \epsilon^{-2/\alpha -1}=C\Vert x\Vert^{-2-\alpha}.
\end{align*}
Thus a good candidate for a  dominating (integrable) function would be
\begin{align*}
F(x,u)=\Vert\PBase{g_{\theta,t}(u),x} \Vert  C \Vert x \Vert^{-2-\alpha}.
\end{align*}
From classical theory of Lévy measures, we know that
\begin{align*} &\abs{k\PBase{g_{\theta,t}(u),x}} \leq 1\wedge \BBase*{\Vert g_{\theta,t}(u) \Vert^2 \Vert x \Vert^2},
\intertext{which implies that}
&F(x,u) \leq C \PBase*{\Vert x \Vert^{-2-\alpha} \wedge \BBase*{\Vert g_{\theta,t}(u) \Vert^2 \Vert x \Vert^{-\alpha} }}.
\end{align*}
By changing to polar coordinates we obtain that (the constant changes from line to line)
\begin{align*}
&\int_\R \int_{\R^2} C \PBase*{\Vert x \Vert^{-2-\alpha} \wedge \BBase*{\Vert g_{\theta,H,t}(u) \Vert^2 \Vert x \Vert^{-\alpha} }} \dif x \dif u
\\
& \qquad = \int_\R \int_0^{2\pi} \int_0^\infty C \PBase*{r^{-2-\alpha} \wedge \BBase*{\Vert g_{\theta,H,t}(u) \Vert^2 r^{-\alpha} }}  r \dif r \dif \psi \dif u
\\ & \qquad 
\leq  \sum_{j=1}^n C\int_\R \int_0^\infty  \PBase*{r^{-1-\alpha} \wedge \BBase*{\Vert f_{t_j}(u) \Vert^2 r^{-\alpha+1} }}   \dif r  \dif u,
\end{align*}
where $f_t$ denotes the integrand of the harmonizable fractional Lévy motion at time $t$. This is exactly the criterion for the existence of the stochastic integral $\int \abs{f_t} \dif \tilde{L}_s$ wrt. an $\alpha$-stable real-valued Lévy process $\tilde{L}$. By the choice of $(\alpha,H)\in \interval[open]{0}{2}\times \interval[open]{0}{1}$ such an integral exists by the existence of the harmonizable fractional stable motion for these parameters. This concludes the argument for dominated convergence and hence the proof.
\end{proof}

\bibliographystyle{plainnat}
%\bibliography{bibliografi}

\end{document}